\newtheorem{theorem}{Theorem}[section]
\newtheorem{corollary}[theorem]{Corollary}
\newtheorem{proposition}[theorem]{Proposition}
\newtheorem{lemma}[theorem]{Lemma}
\newcommand{\colourcomment}[3]
{%
\ifthenelse{\boolean{commentBoolVar}}{{\color{#2}(#1: #3)}}{}%
}%
\theoremstyle{definition}
\newtheorem{definition}[theorem]{Definition}
\newtheorem{example}[theorem]{Example}
\DeclareMathOperator{\Gal}{Gal}
\DeclareMathOperator{\Perm}{Perm}
\newcommand{\gen}[1]{\langle #1 \rangle} 
\newcommand{\B}{\mathfrak{B}}
\newcommand{\pr}{\mathrm{pr}}
\begin{document}

\title{Opposite Skew Left Braces and Applications}
\author{Alan Koch}
\address{Department of Mathematics, Agnes Scott College, 141 E. College Ave., Decatur, GA 30030 USA}
\email{akoch@agnesscott.edu}
\author{Paul J.~Truman}
\thanks{The second author is supported in part by the London Mathematical Society Grant \#41847}
\address{School of Computing and Mathematics, Keele University, Staffordshire, ST5 5BG, UK}
\email{P.J.Truman@Keele.ac.uk}
\date{\today        }

\begin{abstract}
	Given a skew left brace $\B$, we introduce the notion of an ``opposite" skew left brace $\B'$, which is closely related to the concept of the opposite of a group, and provide several applications. Skew left braces are closely linked with both solutions to the Yang-Baxter Equation and Hopf-Galois structures on Galois field extensions. We show that the set-theoretic solution to the YBE given by $\B'$ is the inverse to the solution given by $\B$; this allows us to identify the group-like elements in the Hopf algebra providing the Hopf-Galois structure using only these solutions. We also show how left ideals of $\B'$ correspond to the realizable intermediate fields of a certain Hopf-Galois extension of a Galois extension. 
\end{abstract}

\maketitle

\section{Introduction}

Skew left braces were developed by Guarnieri and Vendramin in \cite{GuarnieriVendramin17} to construct non-degenerate, not necessarily involutive set-theoretic solutions to the Yang-Baxter Equation. They were developed as a generalization to the concept of braces defined by Rump in \cite{Rump07} to find involutive solutions to the YBE. As first pointed out in \cite[Remark 2.6]{Bachiller16} and developed in the appendix by Byott and Vendramin in \cite{SmoktunowiczVendramin18}, finite skew left braces--hereafter, ``braces" for brevity--always arise from Hopf-Galois structures on Galois field extensions. In \cite[Remark 2.6]{Bachiller16}, the author writes ``We hope that this connection between these two theories would be fruitful in the future", a hope which has been fulfilled: for example, in \cite{Childs18} Childs defines the notion of ``circle-stable subgroups'' of a brace and shows that such subgroups correspond to sub-Hopf algebras of the Hopf algebra giving the corresponding Hopf-Galois structure. 

In this work (see Section \ref{opp}), we introduce the rather simple notion of the opposite of a skew left brace. Our construction simply reverses the order in one of the two binary operations which determine the brace. Our motivation comes from an existing pairing of non-commutative Hopf-Galois structures. We illustrate the usefulness of the opposite construction through a few applications.

As mentioned above, skew left braces provide set-theoretic solutions to the Yang-Baxter equation which are non-degenerate. Given a set $B$, a solution is a function $R:B\times B\to B\times B$ satisfying certain properties--see Section \ref{YBE} for details. Each brace $\B$ gives rise to such a solution $R_{\B}$: the non-degeneracy of $R_{\B}$ implies that it has an inverse; in Section \ref{YBE2} we show how the opposite brace allows for an easy construction of $R_{\B}^{-1}$.

Suppose $L/K$ is a finite Galois extension. Then Hopf-Galois structures on $L/K$ correspond with choices of certain groups $N$ of permutations of the elements of $\Gal(L/K)$, which in turn give rise to braces $\B(N)$. Unlike classical Galois theory, a Hopf-Galois structure will give some, but not necessarily all, intermediate fields of $L/K$, only the ones which correspond to sub-Hopf algebras. It is natural to ask which intermediate fields arise, which \cite{Childs18} answers by constructing a new substructure of a brace. In Section \ref{HGS} we use the opposite and relate these intermediate fields with the known brace substructure of ideals (and the closely related, new concept of quasi-ideals) of the opposite brace. Ideals allow us not only to find these intermediate fields $K\le F \le L$, but also single out, for example, which allow $L$ to be decomposed into two Galois extensions $L/F$ and $F/K$ which are also Hopf-Galois in a manner canonically related to the original Hopf-Galois structure.

One can also use the constructed solution to the YBE to understand some of the structure of the Hopf algebra which provides the corresponding Hopf-Galois structure. By using both connections to skew left braces, we are able to determine the group-like elements of the Hopf algebra by examining the second component of the solution to the YBE.

It is possible that a brace be equal to its own opposite, but it is easy to see that this happens if and only if a certain commutativity condition is satisfied. However, it is also possible to have a brace be isomorphic to its opposite, forming what we call, with abuse of terminology, a {\it self-opposite} brace. Knowing if a brace is self-opposite has important consequences when determining the intermediate fields in a Hopf-Galois extension which arise through the Hopf-Galois correspondence. Thus, in Section \ref{SO} we consider the self-opposite question. At this point, there seems to be no simple criterion to determine whether a brace is self-opposite.

The first author would like to thank Keele University for its hospitality during the development of this paper.

\section{(Skew Left) Braces, the Yang-Baxter Equation, and Hopf-Galois Structures}

In this section we provide the background necessary for the rest of the paper. 
 
\subsection{Braces} We begin, of course, with the definition of a skew left brace. At this point, there does not seem to be standard notation for skew left braces; we set ours based on \cite{GuarnieriVendramin17}.

\begin{definition}
A {\it skew left brace} $\B$ is a triple $(B,\cdot,\circ)$ consisting of a set and two binary operations, where $(B,\cdot)$ and $(B,\circ)$ are both groups and the following relation holds for all $x,y,z\in B$:
\[x\circ(yz) = (x\circ y)\cdot x^{-1}\cdot  (x\circ z),\]
where the symbol $x^{-1}$ refers to the inverse to $x\in (B,\cdot)$. We call the relation above the {\it brace relation}.
\end{definition}

As one would expect, a {\it brace homomorphism} is a map preserving both the dot and circle operations, and an bijective homomorphism is a {\it brace isomorphism}.

As stated in the introdution, for brevity we will refer to a skew left brace simply as a {\it brace}, however the reader should be aware that ``(left) brace'' is used by many to refer to the case where $(B,\cdot)$ is abelian as in \cite{Rump07}.

Going forward, we will adopt the following notational conventions for $\B=(B,\cdot,\circ)$, the first (mentioned above) included for completeness:
\begin{itemize}
	\item For $x\in B$, the inverse to $x$ in $(B,\cdot)$ will be denoted $x^{-1}$.
	\item For $x\in B$, the inverse to $x$ in $(B,\circ)$ will be denoted $\overline{x}$.
	\item For $x,y\in B$ we will write $xy$ for $x\cdot y$ when no confusion can arise. 
	\item The identity in both $(B,\cdot)$ and $(B,\circ)$ will be denoted $1_B$. Note that the symbol $1_B$ is not ambiguous: if $x\cdot 1_B= x$ for all $x\in B$ then
	\[x\circ 1_B = x\circ (1_B\cdot 1_B) = (x\circ 1_B)x^{-1}(x\circ 1_B),\]
	from which it follows from left cancellation that $x^{-1}(x\circ 1_B) = 1_B$, i.e., $x=x\circ 1_B$.
\end{itemize}
Here are some examples which will be used throughout this paper.

\begin{example}\label{one}
	Let $(B,\cdot)$ be any finite group. Then $\B=(B,\cdot,\cdot)$ is readily seen to be a brace. We call this the {\it trivial brace} on $B$.
\end{example}

\begin{example}\label{two}
	Let $(B,\cdot)$ be any finite group, and define $x\circ y = y\cdot x$ for all $x,y\in B$. Then $\B=(B,\cdot,\circ)$ is also a brace. We call this the {\it almost trivial brace} on $B$.
\end{example}

\begin{example}\label{three}
	Let
	\[B=\gen{\eta,\pi:\eta^4=\pi^2=\eta\pi\eta\pi=1}.\]
	Then $B\cong D_4$, the dihedral group of order $8$. Define a binary operation $\circ$ on $B$ as follows:
	\[\eta^i\pi^j\circ \eta^k\pi^{\ell} = \eta^{2j{\ell}}\left(\eta^k\pi^{\ell}\right)\left(\eta^i\pi^{j}\right)= \eta^{k+(-1)^{\ell}i+2j{\ell}}\pi^{j+\ell}.\]
	Note that $\eta^{2j\ell}$ is in the center of $(B,\cdot)$. This operation is associative: let $x_j=\eta^i\pi^j,\;x_{\ell}=\eta^k\pi^{\ell},$ and $x_n=\eta^m\pi^n$ for some choices $i,k,m$. Observe that, e.g., $x_jx_{\ell}=y_{j+\ell}$ for some $y_{j+\ell} = \eta^r\pi^{j+\ell}$. Then
	\[ x_j\circ(x_{\ell}\circ x_n) = x_j \circ (\eta^{2\ell n}x_nx_{\ell}) = \eta^{2j(\ell+n)}\eta^{2\ell n}x_nx_{\ell}x_j=\eta^{2j\ell +2jn + 2\ell n}x_nx_{\ell}x_j\]
	and similarly
	\[(x_j\circ x_{\ell})\circ x_n = \eta^{2j{\ell}}x_{\ell}x_j\circ x_n = \eta^{2n(j+{\ell})}\eta^{2j{\ell}}x_nx_{\ell}x_j=\eta^{2j\ell +2jn + 2\ell n}x_nx_{\ell}x_j.\]
	Additionally, $\eta^i\pi^j\circ 1_B = \eta^i\pi^j$ so $1_B$ is the identity, and 
	\[\eta^i\circ \eta^{-i} = 1_B,\; \eta^{i}\pi \circ {\eta^{i+2}\pi} = 1_B\]
	shows $\overline{\eta^i}=\eta^{-i}$ and $\overline{\eta^i\pi} = \eta^{i+2}\pi$ and hence $(B,\circ)$ is a group. The identities $\pi\circ \eta = \eta^{-1}\circ \pi$ and $\pi\circ \pi =\eta \circ \eta$ can be easily established, and since 
	\[\eta^{\circ k} := \underbrace{\eta\circ\eta \cdots \circ \eta}_{k\text{ times}} = \eta^k\]
	we see $\eta\in(B,\circ)$ has order $4$, hence $(B,\circ)\cong Q_8$.
	
	Finally, we claim that $(B,\cdot\circ)$ satisfies the brace relation. Writing $x_j,x_{\ell}$, and $x_n$ as above we get
	\begin{align*}
	x_j\circ (x_{\ell} x_n)
	&= \eta^{2j(\ell+n)} x_{\ell}x_nx_j\\
	&= \eta^{2j\ell} x_{\ell} \eta^{2jn}x_nx_j\\
	&= \eta^{2j{\ell}}x_{\ell}(x_j x_j^{-1}) \eta^{2jn}x_{n}x_j\\
	&= (x_j\circ x_{\ell})x_j^{-1}(x_j \circ x_n),
	\end{align*}
	and hence $(B,\cdot,\circ)$ is a brace.
\end{example}

\subsection{The Yang-Baxter Equation}\label{YBE}

As mentioned previously, skew left braces were originally constructed to provide set-theoretic solutions to the Yang-Baxter Equation. We now review this concept.

\begin{definition} A {\it set-theoretic solution to the Yang-Baxter Equation} is a set $B$ together with a function $R:B\times B \to B\times B$ such that 
	\[R_{12}R_{23}R_{12}(x,y,z) = R_{23}R_{12}R_{23}(x,y,z)\]
for all $x,y,z\in B$, where $R_{12} = R\times 1_B$ and $R_{23}=1_B\times R$. 

Furthermore, we say $R$ is {\it involutive} if $R(R(x,y))=(x,y)$ for all $x,y\in B$; and if we write $R(x,y)=(f_x(y),f_y(x))$ for some functions $f_x,f_y:B\to B$ we say $R$ is {\it non-degenerate} if $f_x$ and $f_y$ are both bijections.
\end{definition}

Notice above that we will often refer to $R$ as the solution, leaving $B$ implicit.

\begin{example}\label{oneYBE}
	Let $B$ be any finite group, written multiplicatively. Then $R(x,y) = (y,y^{-1}xy)$ is a non-degenerate solution to the YBE. It is involutive if and only if $B$ is abelian.
\end{example}

\begin{example}\label{twoYBE}
	In a manner similar to the above, let $B$ be any finite group, written multiplicatively. Then $R(x,y) = (x^{-1}yx,x)$ is a non-degenerate solution to the YBE. It is also involutive if and only if $B$ is abelian.
\end{example}

\begin{example}\label{threeYBE}
	Let $B$ be the set $B=\{\eta^i\pi^j:0\le i \le 3,0\le \pi \le 1 \}$.  Then
	\[R(\eta^i\pi^j,\eta^k\pi^{\ell})=\left(\eta^{(-1)^jk+2i\ell+2j\ell}\pi^{\ell},\eta^{i+2j\ell}\pi^{j} \right)\]
		provides a non-degenerate solution to the YBE, where the exponent on $\eta$ is interpreted $\bmod\;4$ and the exponent on $\pi$ is interpreted $\bmod \;2$. We leave the details to the reader for now, although it will follow from the paragraph to follow that $R$ must satisfy with YBE.
\end{example}

The connection between solutions to the YBE and braces are as follows. Suppose $\B:=(B,\cdot,\circ)$ is a brace. Let $R_{\B}:B \times B\to B\times B$ be given by
\[R_{\B}(x,y) = (x^{-1}(x\circ y), \overline{x^{-1}(x\circ y)}\circ x \circ y).\]

By \cite[Theorem 1]{GuarnieriVendramin17}, $R_{\B}$ provides a non-degenerate, set-theoretic solution to the YBE, involutive if and only if $(B,\cdot)$ is abelian. In fact, Examples \ref{oneYBE}, \ref{twoYBE}, \ref{threeYBE} were constructed from the braces given in Examples \ref{one}, \ref{two}, and \ref{three} respectively. 

\subsection{Hopf-Galois Structures}

We start by recalling the definition of a Hopf-Galois extension--more details can be found, e.g., in \cite[\S 2]{Childs00}.

\begin{definition}
Let $L/K$ be a field extension. Suppose there exists a $K$-Hopf algebra $H$, with comultiplication and counit maps $\Delta$ and $\varepsilon$ respectively,  which acts on $L$ such that
\begin{enumerate}
	\item $h\cdot(st) = \operatorname{mult}\Delta(h)(s\otimes t), h\in H,\; s,t\in L$,
	\item $h(1)=\varepsilon(h)1,\;h\in H$,
	\item The $K$-module homomorphism $L\otimes_KH \to \operatorname{End}_K(L)$ given by $(s\otimes h)(t) = sh(t), \; h\in H, s,t\in L$ is an isomorphism.
\end{enumerate}
Then $H$ is said to provide a {\it Hopf-Galois structure} on $L/K$, and we say $L/K$ is Hopf-Galoiswith respect to $H$, or $H$-Galois.

\end{definition}
If $H$ gives a Hopf-Galois structure on $L/K$ then
\[L^H:=\{s\in L:h(s) = \varepsilon(h)s \text{ for all } h\in H\} = K,\]
and we think of $K$ as the ``fixed field'' under this action. If $H_0$ is a sub-Hopf algebra of $H$, then $L^{H_0}$, defined analogously, is an intermediate field in the extension $L/K$. While the usual Galois correspondence provides a bijection between subgroups and intermediate fields, the correspondence between sub-Hopf algebras and intermediate fields need not be onto (though it is certainly injective).

In the groundbreaking paper \cite{GreitherPareigis87}, Greither and Pareigis showed that Hopf-Galois structures on any separable field extension $L/K$ could be found using only group theory; we shall outline their results in the case where $L/K$ is Galois. Let $G=\Gal(L/K)$, and let $\Perm(G)$ denote the group of permutations of $G$. A subgroup $N\le \Perm(G)$ is called {\it regular} if for all $g,h\in G$ there exists a unique $\eta\in N$ such that $\eta[g]=h$. Note that $N$ must have the same order as $G$. Furthermore, we shall say $N$ is {\it $G$-stable} if $^g\eta\in N$ for all $g\in G,\eta\in N$, where $^g\eta\in\Perm(G)$ is given by
\[^g\eta[h] = \lambda(g)\eta\lambda(g^{-1})[h],\;h\in G\]
and $\lambda(k)\in\Perm(G)$ is left multiplication by $k\in G$.

Given a regular, $G$-stable $N\le \Perm(G)$, let $H_N$ be the invariant ring $H_N = L[N]^G$, where $G$ acts on $N$ as above and on $L$ through Galois action. Then $H_N$ is a $K$-Hopf algebra which acts on $\ell\in L$ via 
\[\left(\sum_{\eta \in N} a_\eta \eta \right) \cdot \ell = \sum_{\eta \in N} a_\eta \eta^{-1}[1_G](\ell),\;\sum_{\eta \in N} a_\eta \eta\in H_N\subset L[N]^G.\]
The association $N\mapsto H_N$ for $N\le \Perm(G)$ is a bijection between regular, $G$-stable subgroups and Hopf Galois structures on $L/K$.

\begin{example}\label{twoHGS}
	Let $N=\rho(G)=\{\rho(g):g\in G\}$, where $\rho(g)[h]=hg^{-1}$ is right regular representation. For $h,k\in G$ then $\rho(g)[h]=k$ if and only if $g=hk^{-1}$, hence $\rho(G)$ is regular. Since the images of the left and right regular representations commute, $\rho(G)$ is $G$-stable. In fact, $\lambda(g)$ acts trivially on $\rho(h)$ for all $g,h\in G$, so $H_{\rho(G)}\cong K[G]$. Using the formula given above we see that the action of $ H_{\rho(G)} $ on $ L $ corresponds to the usual action of $ K[G] $, and so we recover the classical Galois structure on $L/K$.
\end{example}

\begin{example}\label{oneHGS}
	Let $N=\lambda(G)=\{\lambda(g):g\in G\}$, $\lambda(g)$ as above. Then $\lambda(G)$ is regular, and since $^g\lambda(h)=\lambda(ghg^{-1})\in \lambda(G)$ we see that $\lambda(G)$ is a $G$-stable subgroup of $\Perm(G)$. The structure given by $H_{\lambda(G)}$ is called the {\it canonical nonclassical Hopf-Galois structure} in \cite{Truman16}.  
\end{example}

\begin{example}\label{threeHGS}
	Suppose $G=\gen{s,t:s^4=t^4=1,s^2=t^2,stst^{-1}=1}\cong Q_8$. Let $\eta=\rho(s),\pi=\lambda(s)\rho(t)\in\Perm(G)$, and let $N=\gen{\eta,\pi}$. Then $N\le\Perm(G)$ is regular, $G$-stable, and $N\cong D_4$, the dihedral group of order $4$: see \cite[Lemma 2.5]{TaylorTruman19} for details. Note that this is one of many regular, $G$-stable subgroups of $\Perm(G)$, as found in {\it loc.\@ cit.}
\end{example}

\subsection{Connecting Braces to Hopf-Galois Structures}\label{conn}

As mentioned in the introduction, Bachiller points out a connection between Hopf-Galois structures and braces. We shall describe this connection using an equivalent, but different, formulation of the correspondence. 

Let $\ast_G$ denote the group operation on some finite group $G$, and suppose $(N,\cdot)\le \Perm(G)$ is regular and $G$-stable. Then there is a map $a:N\to G$ given by
\[a(\eta) = \eta[1_G].\]
By the regularity of $N$, $a$ is a bijection. We define a binary operation $\circ$ on $N$ by
\[\eta \circ\pi = a^{-1}(a(\eta)\ast_G a(\pi)),\;\eta,\pi\in N.\]
Then $(N,\circ)\cong (G,\ast_G)$, and $(N,\cdot,\circ)$ is a brace--note that $ G $-stability is used in verifying that the brace relation holds. We shall denote this brace by $\B(N)$, which we understand depends implictly on $G$. As every Hopf-Galois structure on a Galois extension with group $G$ corresponds to a regular, $G$-stable $N$ we get can construct a brace for every such structure. 

\begin{example}
	Let $N=\rho(G)=\{\rho(g):g\in G\}$, where $\rho(g)[h]=hg^{-1}$ is right regular representation. Then $a:\rho(G)\to G$ is the ``inverse'' map $a(\rho(g))=g^{-1}$, and the corresponding brace has circle operation
	\[\rho(g)\circ \rho(h) = a^{-1}(a(\rho(g))a(\rho(h))) =a^{-1}(g^{-1}h^{-1})= \rho((g^{-1}h^{-1})^{-1}) = \rho(hg) = \rho(h)\rho(g)\]
	giving the almost trivial brace constructed in Example \ref{two}.
\end{example}

\begin{example}
	Let $N=\lambda(G)$, so $a:N\to G$ is simply $a(\lambda(g))=g$. Then
	\[\lambda(g)\circ \lambda(h) = a^{-1}(a(\lambda(g))a(\lambda(h))) = a^{-1}(gh) = \lambda(gh) = \lambda(g)\lambda(h)\]
	giving the trivial brace from Example \ref{one}
\end{example}

\begin{example}\label{threeHGS2B}
	Let $G,N$ be as in Example \ref{threeHGS}. Then $a:N\to G$ is given by 
	\[a(\eta^i)=\eta^{i}[1_G]=s^{-i},\;a(\eta^i\pi) = \eta^i\pi[1_G] = \eta^{i}[st^{-1}]=st^{-1}s^{-i}=s^{i+1}t^{-1}.\]
It is easiest to work out the circle operation in cases, depending on the powers of $\pi$. We have
	\begin{align*}
	\eta^i\circ\eta^k &= a^{-1}(s^{-i-k})=\eta^{i+k} \\
	\eta^i\circ\eta^k\pi &= a^{-1}(s^{-i+k+1}t^{-1}) = \eta^{k-i}\pi\\
	\eta^i\pi\circ\eta^k &= a^{-1}(s^{i+1}t^{-1}s^{-k}) = a^{-1}(s^{i+k+1}t^{-1}) = \eta^{i+k}\pi\\
	\eta^i\pi\circ\eta^k\pi &= a^{-1}(s^{i+1}t^{-1}s^{k+1}t^{-1}) = a^{-1}(s^{i-k}t^{-2}) = a^{-1}(s^{i-k+2})=\eta^{k-i-2}.
	\end{align*}
	Generally, 
	\[\eta^i\pi^j\circ \eta^k\pi^{\ell} = \eta^{k+(-1)^{\ell}i+2j{\ell}}\pi^{j+\ell},\]
	which agrees with the brace constructed in Example \ref{three}.

\end{example}

Conversely, suppose $\B=(B,\cdot,\circ)$ is a brace. Then $(B,\circ)$ is a group. For each $x\in B$ define $\eta_x\in\Perm(B,\circ)$ by 
\[\eta_x[y]=x\cdot y,\; y\in B.\]
Then $\eta_x[y]=z$ if and only if $x=z\cdot y^{-1}$, so $N=\{\eta_x : x\in B\}$ is a regular subgroup of $\Perm(B,\circ)$. Furthermore, $N$ is $(B,\circ)$-stable: for $x,y\in B$ we have, since $\lambda(y)\in\Perm(B,\circ)$ is left multiplication in the circle group,
\begin{align*}
^y\eta_x[z]&=\lambda(y)\eta_x\lambda(\overline{y})[z]\\
&= \lambda(y)\eta_x[\overline{y}\circ z]\\
&= \lambda(y)[x\cdot(\overline{y}\circ z)]\\
&= y\circ (x\cdot(\overline{y}\circ z)) \\
&= (y\circ x) y^{-1} (y\circ \overline{y}\circ z)\\
&= (y\circ x) y^{-1} \cdot z\\
&=\eta_{(y\circ x)y^{-1}}[z],
\end{align*}
so $^y{\eta_x}=\eta_{(y\circ x)y^{-1}}\in N$. Thus, $N$ is a regular, $(B,\circ)$-stable subgroup of $\Perm(B,\circ)$, hence $N$ provides a Hopf-Galois structure on any Galois extension $L/K$ with Galois group isomorphic to $(B,\circ)$.

\begin{example}\label{four}
	Let $G=\gen{s,t}\cong Q_8$ as in Example \ref{threeHGS}. Let $\eta_t=\rho(t),\;\pi_t=\lambda(t)\rho(s)$, and let $N_t=\gen{\eta_t,\pi_t}$. Then, by \cite[Lemma 2.5]{TaylorTruman19}, $N_t\le\Perm(G)$ is regular,  $G$-stable,  isomorphic to $D_4$, but different from the one considered in Example \ref{threeHGS}. Proceeding in a manner similar to \ref{threeHGS2B} one can show
	\[\eta_t^i\pi_t^j\circ \eta_t^k\pi_t^{\ell} = \eta_t^{k+(-1)^{\ell}i+2j{\ell}}\pi_t^{j+\ell},\]
	and thus we see that different Hopf-Galois structures can give the same brace. 
\end{example}

\section{The Opposite Brace}\label{opp}

In this section, we shall define the opposite brace and describe some of its properties.

\begin{proposition}
	Let $\B=(B,\cdot,\circ)$ be a brace, and for each $x,y\in B$ define $x \cdot' y = yx$. Then $\B':=(B,\cdot',\circ)$ is a brace.
\end{proposition}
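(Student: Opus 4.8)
The plan is to verify the three requirements in the definition of a brace for the triple $(B,\cdot',\circ)$: that $(B,\cdot')$ is a group, that $(B,\circ)$ is a group, and that the brace relation holds for $\cdot'$ and $\circ$.

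First I would dispense with the group axioms. The operation $\cdot'$ is nothing but the opposite of $\cdot$, so $(B,\cdot')$ is the opposite group of $(B,\cdot)$ and is therefore a group. Concretely, $1_B$ remains a two-sided identity, and the inverse of $x$ in $(B,\cdot')$ coincides with its inverse $x^{-1}$ in $(B,\cdot)$, since $x\cdot' x^{-1} = x^{-1}x = 1_B$ and likewise on the other side. The operation $\circ$ is untouched, so $(B,\circ)$ is a group exactly as it was in $\B$. Thus the only axiom requiring genuine checking is the brace relation.

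To verify it, I would unwind the definition of $\cdot'$ on each side of the proposed relation
\[x\circ(y\cdot' z) = (x\circ y)\cdot' x^{-1}\cdot'(x\circ z).\]
On the left, $y\cdot' z = zy$, so the left-hand side is $x\circ(zy)$. On the right, using $a\cdot' b = ba$ together with the associativity of $\cdot'$, the expression $(x\circ y)\cdot' x^{-1}\cdot'(x\circ z)$ reverses to $(x\circ z)\,x^{-1}\,(x\circ y)$ in the original operation $\cdot$. Hence the brace relation for $\B'$ is equivalent to
\[x\circ(zy) = (x\circ z)\,x^{-1}\,(x\circ y).\]

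The key observation is that this is precisely the brace relation for $\B$ with the roles of $y$ and $z$ interchanged. Since the relation $x\circ(yz) = (x\circ y)\,x^{-1}\,(x\circ z)$ holds for all $x,y,z\in B$, it holds in particular after relabelling, and the displayed identity follows immediately. Therefore $(B,\cdot',\circ)$ satisfies the brace relation and is a brace. I expect no serious obstacle here beyond careful bookkeeping of the order reversals induced by $\cdot'$; the one point genuinely demanding attention is confirming that the $\cdot$-inverse and the $\cdot'$-inverse of $x$ agree, so that the same symbol $x^{-1}$ may legitimately be used on both sides of the relation.
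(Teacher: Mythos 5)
Your proof is correct and follows essentially the same route as the paper's: both reduce the $\cdot'$-brace relation to the original brace relation with the roles of $y$ and $z$ interchanged, after observing that $(B,\cdot')$ is the opposite group sharing the same identity and inverses. The only difference is presentational --- you reduce both sides to the $\cdot$ operation and invoke the relabelled relation, while the paper runs a single chain of equalities from the left-hand side --- so there is nothing to add.
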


\begin{proof}
	Clearly, $(B,\circ)$ is a group, and since $(B,\cdot')$ is the opposite group of $(B,\cdot)$ it is a group as well, sharing the same identity and inverses. It remains to show the brace relation. For $x,y,z\in B$ we have, using the brace relation on $\B$,
	\begin{align*}
	x\circ(y\cdot'z) &= x\circ(zy) \\
	&= (x\circ z)x^{-1}(x\circ y) \\
	&= (x\circ z) \cdot \left( (x\circ y)\cdot' x^{-1} \right) \\
	&=  \left( (x\circ y)\cdot' x^{-1} \right) \cdot' (x\circ z) \\
	&= (x\circ y)\cdot' x^{-1} \cdot' (x\circ z), 
	\end{align*}
and hence $\B'$ is a brace.
\end{proof}

\begin{definition}
	For $\B=(B,\cdot,\circ)$ a brace, the brace $\B'$ constructed above is called the {\it opposite} brace to $\B$.
\end{definition}

We list the following properties for future reference. Their proofs are trivial and omitted.

\begin{lemma}
	Let $\B=(B,\cdot,\circ)$ be a brace. Then
	\begin{enumerate}
		\item $(\B')' = \B$.
		\item If $(B,\cdot)$ is abelian, then $\B'=\B$.
		\item If $\mathfrak{C}$ is a brace, and $f:\B\to\mathfrak{C}$ is a brace homomorphism, then $f$ is also a brace homomorphism $\B'\to\mathfrak{C}'$.
	\end{enumerate}
\end{lemma}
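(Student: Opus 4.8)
The plan is to verify each of the three claims directly from the definitions, since the lemma explicitly states the proofs are trivial. For part (1), I would unwind the construction of the opposite brace applied twice. Starting from $\B=(B,\cdot,\circ)$, the operation on $\B'$ is $x\cdot' y = y\cdot x$. Applying the construction again to $\B'=(B,\cdot',\circ)$ gives an operation $x\cdot'' y = y\cdot' x = x\cdot y$, recovering the original dot operation exactly, while the circle operation is unchanged throughout. Hence $(\B')'=(B,\cdot'',\circ)=(B,\cdot,\circ)=\B$.

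For part (2), I would simply observe that if $(B,\cdot)$ is abelian then $x\cdot' y = y\cdot x = x\cdot y$ for all $x,y\in B$, so $\cdot'=\cdot$ as operations and therefore $\B'=(B,\cdot',\circ)=(B,\cdot,\circ)=\B$. No further argument is needed once commutativity is invoked.

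For part (3), let $f:\B\to\mathfrak{C}$ be a brace homomorphism, so $f$ preserves both $\cdot$ and $\circ$; write $\cdot'$ and $\cdot_{\mathfrak C}'$ for the reversed multiplications on $\B'$ and $\mathfrak{C}'$ respectively. I would check that $f$ still preserves $\circ$ (which is immediate, since the circle operation is identical on a brace and its opposite) and that $f$ respects the reversed dot operation. For the latter, I would compute $f(x\cdot' y)=f(y\cdot x)=f(y)\cdot f(x)=f(x)\cdot_{\mathfrak{C}}' f(y)$, using that $f$ is a homomorphism for $\cdot$ in the middle step and the definition of the opposite multiplication on $\mathfrak{C}$ in the last step. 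This confirms $f:\B'\to\mathfrak{C}'$ is a brace homomorphism.

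None of the three parts presents a genuine obstacle; they are each a one-line manipulation of the definition $x\cdot' y = y\cdot x$. The only point requiring the mildest care is keeping track that the circle operation is carried along unchanged in all three statements, so that the verifications reduce entirely to the behaviour of the reversed multiplication. Because of this triviality, I would in practice simply assert the proofs as the lemma does, or include the three short lines above for completeness.
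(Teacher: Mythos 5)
Your verification is correct and is exactly the routine unwinding of the definition $x\cdot' y = y\cdot x$ that the paper has in mind when it declares these proofs trivial and omits them. Nothing to add.
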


Opposite braces arise arise from an existing construction in Hopf-Galois theory, which we term the {\it opposite Hopf Galois structure}, which we shall now describe. 
Let $G$ be a group, and let $N \le \Perm(G)$ be regular and $G$-stable. Define
\[N'=\operatorname{Cent}_{\Perm(G)}(N)=\{\eta'\in \Perm(G) : \eta\eta'=\eta'\eta \text{ for all }\eta \in N\}.\]
Then, by \cite[Lemmas 2.4.1, 2.4.2]{GreitherPareigis87}, $N'$ is a regular, $G$-stable subgroup of $\Perm(G)$. In fact, for $\eta\in N$, define $\phi_{\eta}\in\Perm(G)$ by $\phi_{\eta}[g]=\mu_g[\eta[1_G]]$, where $\mu_g$ is the element of $N$ such that $\mu_g(1)=g$ (such a $\mu_g$ exists, and is unique, by regularity). One can show that $\phi_{\eta}\phi_{\pi} = \phi_{\pi\eta}$ for $\eta,\pi\in N$, and that $N'$ naturally identifies with the opposite group $N^\mathrm{opp}$ of $N$. The relationship between $N$ and $N'$ has been explored in the area of Hopf-Galois module theory, producing some interesting results \cite{Truman18}.  

Let us compute the brace corresponding to $N'$. Let $a:N\to G$ and $a':N'\to G$ be the bijections obtained by evaluation at $1_G$ as before. Then
\[a'(\phi_{\eta}) = \phi_{\eta}[1_G] = \mu_{1_G}[\eta[1_G]] = 1_N[\eta[1_G]] = \eta[1_G] = a(\eta), \]
hence $\B(N')=(N',\cdot,\circ')$ with
\begin{align*}
\phi_{\eta}\circ' \phi_{\pi} &= (a')^{-1}(a'(\phi_{\eta})a'(\phi_{\pi}))\\
&=(a')^{-1}(a(\eta)a(\pi))\\
&=(a')^{-1}aa^{-1}(a(\eta)a(\pi))\\
&= (a')^{-1}a(\eta\circ\pi)\\
&=\phi_{\eta\circ\pi}.
\end{align*}
Define $f:\B(N')\to (\B(N))'$ by $f(\phi_{\eta}) = \eta$ for all $\eta\in N$. Then 
\[f(\phi_{\eta}\circ'\phi_{\pi}) = f(\phi_{\eta\circ \pi}) = \eta\circ\pi = f(\phi_{\eta})\circ f(\phi_{\pi})\]
and 
\[f(\phi_{\eta}\phi_{\pi}) = f(\phi_{\pi\eta}) = \pi\eta = \eta \cdot' \pi = f(\eta)\cdot' f(\pi)\]
for all $\eta,\pi\in N$. Thus:

\begin{proposition}
	With the notation as above, $\B(N')\cong (\B(N))'$.
\end{proposition}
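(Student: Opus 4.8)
The plan is to prove the proposition by checking that the explicit map $f \colon \B(N') \to (\B(N))'$, $f(\phi_{\eta}) = \eta$, already written down above is a brace isomorphism. Three things need to hold: $f$ is a bijection, $f$ preserves the circle operation, and $f$ preserves the dot operation, carrying the dot of $\B(N')$ to the reversed dot $\cdot'$ of $(\B(N))'$.

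First I would dispose of bijectivity. Both $N$ and $N'$ are regular subgroups of $\Perm(G)$, so each has order $|G|$; moreover the assignment $\eta \mapsto \phi_{\eta}$ is exactly the identification of $N'$ with $N^{\mathrm{opp}}$ recorded earlier, so every element of $N'$ equals $\phi_{\eta}$ for a unique $\eta \in N$. Hence $f$ is well defined and is the two-sided inverse of $\eta \mapsto \phi_{\eta}$, so it is a bijection. Next I would verify the two multiplicative properties, which are precisely the two displayed computations preceding the statement: from $\phi_{\eta}\circ'\phi_{\pi} = \phi_{\eta\circ\pi}$ one gets $f(\phi_{\eta}\circ'\phi_{\pi}) = \eta\circ\pi = f(\phi_{\eta})\circ f(\phi_{\pi})$, and from $\phi_{\eta}\phi_{\pi} = \phi_{\pi\eta}$ (composition in $\Perm(G)$, which is the dot operation on $\B(N')$) one gets $f(\phi_{\eta}\phi_{\pi}) = \pi\eta = \eta \cdot' \pi = f(\phi_{\eta})\cdot' f(\phi_{\pi})$. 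Assembling these, $f$ is a bijective brace homomorphism and therefore an isomorphism, which is the assertion.

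I do not expect any genuine obstacle at this stage: all the delicate work is upstream, in establishing that the centralizer $N'$ consists exactly of the maps $\phi_{\eta}$ and that $\eta \mapsto \phi_{\eta}$ is multiplication-reversing. That upstream fact---resting on the regularity and $G$-stability of $N$ and on \cite[Lemmas 2.4.1, 2.4.2]{GreitherPareigis87}---is the whole reason the dot operation emerges reversed, so that $\B(N')$ matches the opposite brace $(\B(N))'$ rather than $\B(N)$ itself. Given it, the proposition drops out of the two one-line identities together with bijectivity, and no further calculation is required.
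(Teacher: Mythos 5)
Your proposal is correct and follows essentially the same route as the paper: the paper's proof consists precisely of the two displayed identities $f(\phi_{\eta}\circ'\phi_{\pi}) = f(\phi_{\eta})\circ f(\phi_{\pi})$ and $f(\phi_{\eta}\phi_{\pi}) = f(\phi_{\eta})\cdot' f(\phi_{\pi})$ for the map $f(\phi_{\eta})=\eta$, with bijectivity left implicit. Your explicit remark that bijectivity follows from the identification of $N'$ with $N^{\mathrm{opp}}$ via $\eta\mapsto\phi_{\eta}$ is a welcome but minor addition.
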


\begin{example}
	Let $G=\gen{s,t:s^4=t^4=1, s^2=t^2,stst^{-1}}\cong Q_8$ as in Example \ref{threeHGS}. In \cite[Lemma 2.5]{TaylorTruman19} one finds six different regular, $G$-stable subgroups which are isomorphic to $D_4$, namely
	\[\begin{array}{c c c}
	N_{s,\rho} = \gen{\rho(s),\lambda(s)\rho(t)} & N_{t,\rho} = \gen{\rho(t),\lambda(t)\rho(s)} & N_{st,\rho} = \gen{\rho(st),\lambda(st)\rho(t)} \\
	N_{s,\lambda} = \gen{\lambda(s),\lambda(t)\rho(s)} & N_{t,\lambda} = \gen{\lambda(t),\lambda(s)\rho(t)} & N_{st,\lambda} = \gen{\lambda(st),\lambda(t)\rho(st)}.
	\end{array}\]
	Note that the first two correspond to Examples \ref{threeHGS} and \ref{four} respectively. We have seen that $\B(N_{s,\rho})\cong \B(N_{t,\rho})$, and it is easy to see that they are isomorphic to $\B(N_{st,\rho})$ as well. One can quickly verify that the elements of $N_{x,\rho}$ and $N_{x,\lambda}$ commute for each $x\in\{s,t,st\}$, hence the three subgroups in the second row all correspond (up to isomorphism) the same brace, namely $\B(N_{s,\rho})'$.
\end{example}

\section{The Inverse Solution to the Yang-Baxter Equation}\label{YBE2}

Earlier, we saw how a brace $\B:=(B,\cdot,\circ)$ provides us with a set-theoretic solution $R_{\B}$ to the YBE: one which is always non-degenerate, and one which is involutive (that is, self-inverse) if and only if $(B,\cdot)$ is abelian. It is natural to wonder what the inverse to $R_{\B}$ is when $(B,\cdot)$ is not abelian. Since $\B=\B'$ if and only if $(B,\cdot)$ is abelian, perhaps the opposite brace can help us determine the inverse. In fact:

\begin{theorem}
	Let $\B$ be a brace with corresponding solution to the Yang-Baxter Equation $R_{\B}$. Then $R_{\B'}$ is a two-sided inverse to $R_{\B}$, that is, $R_{\B'}R_{\B}(x,y)=R_{\B}R_{\B'}(x,y)=(x,y)$ for all $x,y\in B$. 
\end{theorem}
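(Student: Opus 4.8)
The plan is to verify both compositions directly, exploiting the fact that $\B'$ differs from $\B$ only in replacing $\cdot$ by its opposite $\cdot'$ while keeping the same circle operation and---crucially---the same inverses. Thus the solution attached to $\B'$ is obtained from the displayed formula for $R_{\B}$ simply by reading $\cdot$ as $\cdot'$; since $x^{-1}\cdot'(x\circ y) = (x\circ y)\,x^{-1}$, this gives
\[R_{\B'}(x,y) = \left((x\circ y)\,x^{-1},\; \overline{(x\circ y)\,x^{-1}}\circ x\circ y\right).\]
First I would record the one structural fact that makes everything collapse: if $R_{\B}(x,y) = (a,b)$ with $a = x^{-1}(x\circ y)$ and $b = \overline{a}\circ x\circ y$, then, since $\circ$ is a group operation and $a\circ\overline{a} = 1_B$,
\[a\circ b = a\circ\overline{a}\circ x\circ y = x\circ y.\]
The same identity holds for $R_{\B'}$. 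In words, the circle-product of the two output coordinates of either solution recovers the circle-product of the inputs, and it is this that will let the second coordinate of each composite simplify at a stroke.

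With this in hand the composition $R_{\B'}R_{\B}$ is immediate. Applying $R_{\B'}$ to $(a,b)$, the first coordinate is $a^{-1}\cdot'(a\circ b) = (a\circ b)\,a^{-1} = (x\circ y)\,a^{-1}$; since $a = x^{-1}(x\circ y)$ gives $a^{-1} = (x\circ y)^{-1}x$ in $(B,\cdot)$, this telescopes to $x$. The second coordinate is then $\overline{x}\circ a\circ b = \overline{x}\circ x\circ y = y$, so $R_{\B'}R_{\B}(x,y) = (x,y)$. Here the order-reversal built into $\cdot'$ is exactly what cancels the factor $x^{-1}$ that $R_{\B}$ introduced on the left.

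The reverse composition $R_{\B}R_{\B'}$ is entirely symmetric: writing $R_{\B'}(x,y) = (c,d)$ with $c = (x\circ y)\,x^{-1}$, one has $c\circ d = x\circ y$ again, $c^{-1} = x\,(x\circ y)^{-1}$ in $(B,\cdot)$, and $c^{-1}(c\circ d) = x$, whence $R_{\B}R_{\B'}(x,y) = (x,y)$. There is no serious obstacle here; the only points demanding care are getting the opposite multiplication the right way round when writing down $R_{\B'}$, and keeping straight whether a given inverse is taken in $(B,\cdot)$ or in $(B,\circ)$---the notation $x^{-1}$ versus $\overline{x}$---since the whole argument turns on the interplay between the shared circle operation and the reversed dot operation.
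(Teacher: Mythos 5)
Your proof is correct and follows essentially the same route as the paper's: write down $R_{\B'}$ by reversing the dot operation, use the identity $a\circ b = x\circ y$ for the output coordinates (which the paper uses inline rather than isolating), and telescope the first coordinate to $x$ and the second to $\overline{x}\circ x\circ y = y$. The only cosmetic difference is that you verify both compositions explicitly, whereas the paper handles the second by swapping $\B$ with $\B'$.
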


\begin{proof}
	By interchanging a brace with its opposite, it suffices to show that $R_{\B'}R_{\B}(x,y)=(x,y)$ for all $x,y\in B$. Recall that both $\B$ and $\B'$ have the same inverses, i.e., $x\cdot' x^{-1} = x\circ \overline{x} = 1_B$ where $x^{-1},\overline{x}$ are the inverses in $\B$. 
	
	Let $x,y\in B$. We have
	\begin{align*}
	R_{\B}(x,y) &= (x^{-1}\cdot(x\circ y), \overline{x^{-1}\cdot(x\circ y)}\circ x \circ y)\\
	R_{\B'}(x,y) &= (x^{-1}\cdot'(x\circ y), \overline{x^{-1}\cdot'(x\circ y)}\circ x \circ y)
	= ((x\circ y)\cdot x^{-1}, \overline{(x\circ y)\cdot x^{-1}}\circ x \circ y)
	\end{align*}
	and so, suppressing the dot notation,
	\[
	R_{\B'}R_{\B}(x,y)= R_{\B'}(x^{-1}(x\circ y), \overline{x^{-1}(x\circ y)}\circ x \circ y). \]
	\\
	The first component of this composition is therefore
\[\left(\left(x^{-1}(x\circ y)\right)\circ \left(\overline{x^{-1}(x\circ y)}\circ x \circ y\right)\right)\left(x^{-1}(x\circ y)\right)^{-1}=(x\circ y) (x\circ y)^{-1} x = x,\]
while the second component, using the reduction above, is
\[\overline{x}\circ x \circ y = y,\]
as required. 
\end{proof}
\begin{example}
	Return to the solution $R_{\B}$ from Example \ref{threeYBE}, namely 
	\[R(\eta^i\pi^j,\eta^k\pi^{\ell})=\left(\eta^{(-1)^jk+2i\ell+2j\ell}\pi^{\ell},\eta^{i-2j\ell}\pi^{j} \right),\]
	which was obtained from the brace in Example \ref{three}. The reader can check that we have 
	\[R_{\B'}(\eta^i\pi^j,\eta^k\pi^{\ell})=\left(\eta^{k+2j\ell}\pi^{\ell},\eta^{(-1)^{\ell}i+2jk+2j\ell}\pi^{j} \right).\]
	To verify that $R_{\B'}=R_{\B}^{-1}$, we have
	\begin{align*}
	R_{\B'}R_{\B}(\eta^i\pi^j) &= R_{\B'}\left(\eta^{(-1)^jk+2i\ell+2j\ell}\pi^{\ell},\eta^{i-2j\ell}\pi^{j} \right)\\
	&=\left(\eta^{i+2j{\ell}-2j\ell}\pi^j,\eta^{(-1)^j\left[(-1)^jk+2i\ell + 2j\ell \right]+2\ell(i-2j\ell)-2j\ell}\pi^{\ell} \right)\\
	&=\left(\eta^i, \eta^{k+(-1)^j\left[2i\ell + 2j\ell \right]+2i\ell-2j\ell}\pi^{\ell}\right)\\
	&=(\eta^i\pi^j,\eta^k\pi^{\ell})
	\end{align*} 
	since $\eta^4=1_B$. That $R_{\B'}R_{\B}(\eta^i\pi^j)=(\eta^i,\pi^j)$ is similar.
\end{example}

The explicit inverse solution allows us to identify group-like elements in the corresponding Hopf algebra. Recall that $h\in H$ is {\it group-like} if $\Delta(h)=h\otimes h$ where $\Delta$ is the comultiplication in the Hopf algebra $H$.

\begin{corollary}
	Let the Galois extension $L/K$ be $H$-Hopf Galois for some $K$-Hopf algebra $H_N$. Let $\B=(B,\cdot,\circ)$ be the brace corresponding to this Hopf-Galois structure, and for $i=1,2$ let $\pr_i:B\times B\to B$ be the projection onto the second factor. Then each $y\in B$ with $\pr_2 R_{\B}(x,y) = x$ for all $x$ naturally identifies with a group-like element of $H_N$, and vice-versa.
\end{corollary}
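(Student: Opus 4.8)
The plan is to compute, on the one hand, the group-like elements of $H_N$, and on the other hand the set of $y\in B$ with $\pr_2 R_{\B}(x,y)=x$ for all $x$, and then to check that under the identification of $B$ with $N$ underlying $\B=\B(N)$ these two sets coincide. Since $\B(N)$ has underlying set $N$, a brace element $y$ is literally an element $\eta\in N$, so ``naturally identifies'' will mean that $y$ itself is group-like; the content is the equivalence of the two conditions, which also delivers the ``vice versa''.

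First I would determine the group-like elements. Base-changing to $L$ gives $H_N\otimes_K L\cong L[N]$, whose group-like elements are exactly the elements of $N$; hence a group-like element of $H_N$ is precisely an element of $N$ lying in $H_N=L[N]^G$, i.e.\ an element of $N^G$. Because $G$ acts on $N$ by conjugation through the left regular representation $\lambda(G)$, one has $N^G=N\cap\operatorname{Cent}_{\Perm(G)}(\lambda(G))=N\cap\rho(G)$. Translating through $a\colon N\to G$, $a(\eta)=\eta[1_G]$: an element $\eta$ lies in $\rho(G)$ exactly when $\eta[h]=h\cdot a(\eta)$ for all $h$, and substituting this into $a(\eta\cdot\pi)=\eta[a(\pi)]$ and $a(\pi\circ\eta)=a(\pi)a(\pi)$ shows that $y$ is group-like if and only if $z\circ y=y\cdot z$ for all $z\in B$; call this condition (P).

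Next I would unwind the hypothesis $\pr_2 R_{\B}(x,y)=x$. Writing $\lambda_x(y)=x^{-1}(x\circ y)$ for the first component, the definition of $R_{\B}$ gives at once the identity $\lambda_x(y)\circ\pr_2 R_{\B}(x,y)=x\circ y$; since $(B,\circ)$ is a group, for fixed $x$ the equality $\pr_2 R_{\B}(x,y)=x$ holds if and only if $\lambda_x(y)\circ x=x\circ y$. Circle-multiplying on the right by $\overline{x}$ rewrites this as $\lambda_x(y)=x\circ y\circ\overline{x}$, and I would then solve this for the condition on $y$: using $a\circ b=a\cdot\lambda_a(b)$, the homomorphism property $\lambda_{x\circ y}=\lambda_x\lambda_y$, and the evaluations $\lambda_{\overline{x}}\lambda_x=\operatorname{id}$ and $\lambda_{\overline{x}}(x)=\overline{x}^{-1}$, the equation collapses (after applying $\lambda_{\overline{x}}$) to $\overline{x}\cdot y=y\circ\overline{x}$ for all $x$, that is, $y\circ z=z\cdot y$ for all $z\in B$; call this condition (Q). Steps one and two are routine.

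The real work is to reconcile (P): $z\circ y=yz$ for all $z$, with (Q): $y\circ z=zy$ for all $z$, and I expect this to be the main obstacle, since the two are genuinely different identities (one controls right $\circ$-multiplication by $y$, the other left). My plan is first to note that each of (P), (Q) forces $\overline{y}=y^{-1}$ (put $z=\overline{y}$), so on such $y$ the $\cdot$- and $\circ$-inverses agree; then, from the crossed-homomorphism identity $\lambda_y=L^{\cdot}_{y^{-1}}\circ L^{\circ}_y$ (where $L^{\cdot}_w,L^{\circ}_w$ denote left $\cdot$- and $\circ$-translation), condition (P) turns $L^{\cdot}_{y^{-1}}$ into the right $\circ$-translation by $y^{-1}$ and hence gives $\lambda_y(z)=y\circ z\circ\overline{y}$, i.e.\ $\lambda_y$ is $\circ$-conjugation by $y$. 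It remains to identify this with $\cdot$-conjugation $z\mapsto y^{-1}zy$, which is exactly (Q); this last identification is where $G$-stability must be used, and I would establish it by a direct computation in $\Perm(G)$, comparing the two descriptions $N\cap\rho(G)$ and $\{y:\pr_2 R_{\B}(\,\cdot\,,y)=\mathrm{id}\}$ through the regular family $\{\mu_g\}$ with $\mu_g[1_G]=g$. Once (P) and (Q) are shown equivalent, the identification of $B$ with $N$ carries group-likeness to the solution condition and back, proving the corollary.
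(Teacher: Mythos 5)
Your first two steps are correct and essentially follow the paper: the group-like elements of $H_N$ are exactly $N\cap\rho(G)$, which transports to the condition (P) ``$z\circ y=y\cdot z$ for all $z$'', and the hypothesis $\pr_2R_{\B}(x,y)=x$ for all $x$ unwinds, exactly as you compute via $\lambda_{\overline{x}}$, to the condition (Q) ``$y\circ z=z\cdot y$ for all $z$''. You have also correctly located the crux: the whole corollary now rests on (P) being equivalent to (Q), and at precisely that point your proposal stops at a plan (``I would establish it by a direct computation in $\Perm(G)$'') rather than a proof. That is a genuine gap. It is, moreover, the same step that the paper's own proof passes over with the single sentence ``This can only happen if $\pr_2R_{\B}(x,y)=x$ since $R_{\B}R_{\B'}(x,y)=(x,y)$'': the identity $R_{\B}R_{\B'}=\mathrm{id}$ relates $\pr_1R_{\B'}(x,y)=y$ (for all $x$) to a condition on $y$ sitting in the \emph{first} slot of $R_{\B}$, not to $\pr_2R_{\B}(x,y)=x$.

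The gap cannot be closed, because (P) and (Q) are not equivalent. Take $B=\mathbb{Z}/6\mathbb{Z}$ with $x\cdot y=x+y$ and $x\circ y=x+(-1)^{x}y$; this is a brace with $(B,\cdot)\cong C_6$ and $(B,\circ)\cong S_3$, and one computes $R_{\B}(x,y)=\left((-1)^{x}y,\;(-1)^{y}x\right)$. The element $y=3$ satisfies (P) (since $3=-3$, so $z\circ 3=z+3$ for every $z$), hence $\eta_3\in N\cap\rho(G)$ is a genuine group-like of $H_N$; but $\pr_2R_{\B}(1,3)=-1\neq 1$, so $y=3$ fails the condition in the statement. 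Conversely $y=2$ satisfies $\pr_2R_{\B}(x,2)=x$ for all $x$, yet $1\circ 2=5\neq 3=2\cdot 1$, so $\eta_2$ is not fixed by the Galois action and is not group-like. The two sets are $\{0,3\}$ and $\{0,2,4\}$ respectively, so no appeal to $G$-stability can rescue the identification of $\circ$-conjugation with $\cdot$-conjugation that your last step requires. What your step 1 (and the paper's) actually proves is that $y$ is group-like if and only if $x\circ y=y\cdot x$ for all $x$, i.e.\ if and only if $\pr_1R_{\B'}(x,y)=y$ for all $x$; the statement should be formulated in those terms (or with $y$ placed in the first argument of $R_{\B}$), not as $\pr_2R_{\B}(x,y)=x$.
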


\begin{proof}
	 We claim that an element $h\in H_N=L[N]^G$ is group-like if and only if $h\in N$ and $G$ acts trivially upon it, that is, if and only if $h\in N\cap \rho(G)$. Indeed, $h\in H_N$ is group-like if and only if it is group-like when base changed to $L\otimes_K L[N]^G\cong L[N]$, and since the group-likes in $L[N]$ are the elements of the group $N$ it follows that $h$ is group-like if and only if $h\in N$, say $h=\eta\in N$. But $G$ acts trivially on $\eta$ if and only if $\lambda(g)\eta\lambda(g^{-1}) = \eta$ for all $g\in G$, i.e., $\eta\in\operatorname{Cent}_{\Perm(G)}(\lambda(G)) = \rho(G)$.
	
	Recall that $\B$ induces a regular, $(B,\circ)$ stable subgroup of $\Perm(B,\circ)$: $N=\{\eta_y: y\in B\}\le\Perm(B,\circ)$ where $\eta_y[z]=y\cdot z$, and $^x\eta_y= \eta_{(x\circ y)x^{-1}}$. So $(B,\circ)$ acts trivially on $\eta_y$ if any only if $(x\circ y)x^{-1}=y$, i.e., $\pr_1{R_{\B'}}(x,y)=y$ for all $x\in B$. This can only happen if $\pr_2 R_{\B}(x,y)=x$ since $R_{\B}R_{\B'}(x,y)=(x,y)$. Through the isomorphism $(B,\circ)\to G$ we obtain the grouplike in $H_N$.
\end{proof}
 \begin{example}
 	The trivial brace, corresponding to $N=\lambda(G)$, gives the solution  $R(x,y) = (y,y^{-1}xy)$. So $y$ is a group-like if and only if $y^{-1}xy=x$ for all $x\in B$, i.e., $y\in Z(B,\cdot)$.
 \end{example}
 
 \begin{example}
 	The almost trivial brace, corresponding to $N=\rho(G)$ and the classical Galois structure, gives the solution $R(x,y) = (x^{-1}yx,x)$. Clearly, every $y$ is group-like.
 \end{example}
 
 \begin{example} The brace considered in Example \ref{three}, corresponding to the Hopf-Galois structure in Example \ref{threeHGS}, gives the solution 
 	\[R(\eta^i\pi^j,\eta^k\pi^{\ell})=\left(\eta^{(-1)^jk+2i\ell+2j\ell}\pi^{\ell},\eta^{i+2j\ell}\pi^{j} \right).\]
 	One can see that $\pr_2R(\eta^i\pi^j,\eta^k\pi^{\ell})=\eta^i\pi^j$ for all $i,j$ if and only if $\ell$ is even, hence the group-likes correspond are elements of the form $\eta^k$. This makes sense since $\eta=\rho(s)$.
 \end{example}

\section{On the Hopf-Galois Correspondence}\label{HGS}

Suppose $L/K$ is Galois with Galois group $G$.  We have seen that any $N\le\Perm(G)$ regular, $G$-stable gives rise to a Hopf-Galois structure on $L/K$, but the correspondence between sub-Hopf algebras and intermediate fields is not surjective. It is natural to ask: which intermediate fields arise as the fixed field of a sub-Hopf algebra? Since the correspondence from sub-Hopf algebras to intermediate fields {\it is} injective, this is equivalent to determining the sub-Hopf algebras of $H_N$. 

\begin{definition}
	Let $L/K$ be Hopf-Galois for some Hopf algebra $H$. We say that an intermediate field $K\subseteq F \subseteq L$ is {\it realizable with respect to $H$} (or simply {\it realizable} for short) if $F=L^{H_0}$ for $H_0$ some sub-Hopf algebra of $H$.
\end{definition}

In \cite{Childs18}, Childs shows that realizable subfields are in one-to-one correspondence with what he calls ``$\circ$-stable (or `circle-stable') subgroups'' of the corresponding brace. For $\B=\B(N)=(B,\cdot,\circ)$, a subgroup $C\le (B,\cdot)$ is said to be {\it $\circ$-stable} if $(x\circ y)x^{-1}\in C$ for $x\in B, y\in C$. A $\circ$-stable subgroup is closed under $\circ$ as well, hence is a sub-brace of $\B$.

We will take a different approach to realizable subfields using the results of \cite{KochKohlTrumanUnderwood19a} and the concept of opposites. It is not hard to show that a $\circ$-stable subgroup, when viewed in the opposite brace, looks like the more familiar brace structure called an ideal, one which we generalize somewhat below by relaxing normality conditions.

\begin{definition}
	Let $\B=(B,\cdot,\circ)$ be a brace. 
	\begin{enumerate}
		\item A {\it quasi-ideal}  of $\B$ is a subgroup $I\le (B,\cdot)$ such that 
		\[ x^{-1}(x\circ y)\in I,\; x\in B, y\in I. \]
		\item A {\it $\cdot$-quasi-ideal} ($\cdot$-QI) of $\B$ is a quasi ideal which is normal in $(B,\cdot)$.
		\item A {\it $\circ$-quasi-ideal} ($\circ$-QI) of $\B$ is a quasi ideal which is normal in $(B,\circ)$.
		\item An {\it ideal} of $\B$ is a subgroup of $(B,\cdot)$ which is both a  $\cdot$-QI and a $\circ$-QI.
	\end{enumerate}
\end{definition}

Note that a quasi-ideal $I$ is also a subgroup of $(B,\circ)$, hence is a sub-brace of $\B$. To see this, note that for all $x,y,z\in B$ we have 
\[x^{-1}(x\circ y \circ z) = x^{-1}(x\circ y)(x\circ y)^{-1}(x\circ y\circ z),\]
and if $y,z\in I$ then $x^{-1}(x\circ y) \in I$ and $(x\circ y)^{-1}(x\circ y\circ z)\in I$, hence their product is in $I$ and $I$ is closed under $\circ$. Additionally,
\[1_B=x^{-1}(x\circ y\overline{y}) = x^{-1}(x\circ y) x^{-1}(x\circ \overline{y},\]
and since $1_B\in I$ and $x^{-1}(x\circ y)\in I$ we get that $x^{-1}(x\circ \overline{y})\in I$, i.e., $\overline{y}\in I$.

By \cite[Example 2.2]{GuarnieriVendramin17}, the kernel of a brace morphism has the structure of an ideal. Additionally, by \cite[Lemma 2.3]{GuarnieriVendramin17}, if $I$ is an ideal of $\B$ then both $I$ and $B/I$ are braces. Thus, ideals are essential to understanding the category of braces.

Now suppose $\B=\B(N)$ for $N$ a regular $G$-stable subgroup of $\Perm(G)$, where $G:=\Gal(L/K)$. Each of the substructures above gives us insight as to the intermediate fields in the $H_{N'}$-Hopf Galois structure on $L/K$, where as above $N'=\operatorname{Cent}_{\Perm(G)}(G)$ as before. 

We begin with the simplest of the structures.

\begin{lemma}
	Quasi-ideals $I$ of $\B:=\B(N)$ correspond bijectively with intermediate fields $K\le L_I \le L$ realizable with respect to $H_{N'}$. 
\end{lemma}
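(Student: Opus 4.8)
The plan is to set up the bijection via the opposite brace and the Hopf-Galois correspondence, reducing everything to statements already established in the excerpt. The target structure is the $H_{N'}$-Hopf-Galois structure on $L/K$, and $\B(N') \cong (\B(N))' = \B'$ by the proposition of Section \ref{opp}. So realizable fields with respect to $H_{N'}$ correspond (by the result of \cite{Childs18}) to $\circ$-stable subgroups of $\B(N') \cong \B'$. The first step is therefore to unwind what a $\circ$-stable subgroup of $\B'$ is in terms of the original brace $\B$. Recall $C \le (B,\cdot')$ is $\circ$-stable in $\B'$ if $(x\circ y)\cdot' x^{-1} \in C$ for all $x \in B$, $y \in C$; since $x \cdot' w = w \cdot x$, the condition $(x\circ y)\cdot' x^{-1} = x^{-1}(x\circ y)$, and a subgroup of $(B,\cdot')$ is exactly a subgroup of $(B,\cdot)$ (the opposite group has the same subgroups). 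Thus a $\circ$-stable subgroup of $\B'$ is precisely a subgroup $I \le (B,\cdot)$ with $x^{-1}(x\circ y) \in I$ for all $x\in B, y \in I$ — which is exactly the definition of a quasi-ideal of $\B$. This is the conceptual heart of the argument, already flagged in the paragraph preceding the definition of quasi-ideal.

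Next I would assemble the bijection explicitly. By \cite{Childs18}, $\circ$-stable subgroups of $\B(N')$ correspond bijectively to realizable intermediate fields with respect to $H_{N'}$; I would cite this directly rather than reprove it. Combining with the identification above, quasi-ideals of $\B = \B(N)$ correspond bijectively to $\circ$-stable subgroups of $\B(N')$, and hence to the realizable fields. The mild care needed is to confirm that the isomorphism $\B(N') \cong \B'$ of the earlier proposition carries $\circ$-stable subgroups of $\B(N')$ to $\circ$-stable subgroups of $\B'$; but this is immediate since $\circ$-stability is defined purely in brace-theoretic terms ($\cdot$, $\circ$, and group inverses) and $f$ is a brace isomorphism, so it preserves the defining condition and sends subgroups to subgroups bijectively.

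Concretely, I would phrase the correspondence so the reader sees the field attached to a quasi-ideal $I$: form the corresponding $\circ$-stable subgroup $C = I$ of $\B(N') \cong \B'$, pass through the $N'$-picture to get a sub-Hopf algebra $H_0 \le H_{N'}$, and set $L_I = L^{H_0}$. The bijectivity and the fact that distinct quasi-ideals give distinct fields both follow from the injectivity of the Hopf-Galois correspondence (sub-Hopf algebras to fields) together with the bijectivity statement of \cite{Childs18}. I do not expect to grind through the Greither-Pareigis dictionary here; it suffices to invoke that the $\circ$-stable subgroups of a brace $\B(M)$ index the realizable fields of $H_{M'}$, applied with $M = N'$ so that $M' = N'' = N$ recovers $H_N$ — wait, one must be careful about which brace plays which role.

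This is the step I expect to be the main obstacle: keeping the opposite/centralizer bookkeeping straight. The statement attaches realizable fields of $H_{N'}$ to quasi-ideals of $\B(N)$, so I must verify that $\circ$-stable subgroups of $\B(N')$ (which, by \cite{Childs18} applied to the brace $\B(N')$ and its associated structure, index realizable fields of the structure built from $\B(N')$) really do govern the $H_{N'}$-structure, and not the $H_{(N')'} = H_N$-structure. I would resolve this by pinning down, once and cleanly, the convention under which \cite{Childs18} associates $\circ$-stable subgroups of $\B(M)$ to sub-Hopf algebras of $H_M$, then specializing to $M = N'$ and using $\B(N') \cong \B'$ to rewrite the indexing set as quasi-ideals of $\B$. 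Once the convention is fixed, every remaining verification is formal.
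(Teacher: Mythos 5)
Your argument is correct and follows essentially the same route as the paper's proof: identify quasi-ideals of $\B$ with $\circ$-stable subgroups of $\B'$ via $x^{-1}(x\circ y) = (x\circ y)\cdot' x^{-1}$, transport them through the brace isomorphism $\B' \cong \B(N')$, and invoke Childs' theorem. The bookkeeping worry you flag at the end resolves exactly as you suspect it should: Childs associates $\circ$-stable subgroups of $\B(M)$ to sub-Hopf algebras of $H_M$ itself, so specializing to $M = N'$ yields the realizable fields for $H_{N'}$, which is what the lemma asserts.
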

\begin{proof}
	Let $I$ be a quasi-ideal of $\B$. Since the underlying sets of $\B$ and $\B'$ are the same, namely $N$, and $x^{-1}(x\circ y) = (x\circ y)\cdot' x^{-1}$ for all $x,y\in N$ we get that $I$ is a $\circ$-stable subgroup of $\B'$. Through the isomorphism $(\B(N))'\to\B(N')$ its image is a $\circ$-stable subgroup in $\B(N')$, say $I'$. Then, by \cite[Theorem 4.3]{Childs18}, $I'$ corresponds to a sub-Hopf algebra of $H_{N'}$, hence an intermediate field in $L/K$ which is realizable with respect to $H_{N'}$. Conversely, if $F$ is a field which is realizable with respect to $H_{N'}$, there is a corresponding $\circ$-stable subgroup of $\B(N')$, hence of $\B'$, giving us a quasi-ideal of $\B$.
\end{proof}

	 By \cite[Prop. 2.2]{KochKohlTrumanUnderwood19a} (which itself is a reformulation of the ideas from \cite[\S 5]{GreitherPareigis87}), sub-Hopf algebras of $H_N$ correspond bijectively to $G$-stable subgroups $I$ of $N$, hence realizable fields correspond to such $I$. We can relate this theory to \cite{Childs18} as follows. Suppose $\B=(B,\cdot,\circ)$ is a brace, and $I$ is a $\circ$-stable subgroup of $\B$. Let $G=(B,\circ)$, and let $N=\{\eta_x:x\in B\}\le\Perm(G)$ where $\eta_x[y]=x\cdot y$. Let $I_*=\{\eta_{i}:i\in I\}\le N$. Then 
	 \[^x\eta_i=\eta_{(x\circ i)x^{-1}}, \;x,i\in B\]
	 and since $I$ is $\circ$-stable we know $(x\circ i)x^{-1}\in I$, hence $I_*$ is $G$-stable. It is easy to see that the converse holds as well.

	Additionally, if $I\trianglelefteq N$ then $L^{H_I}/K$ is also Hopf-Galois for a particular Hopf algebra related to $H_{N}$--see \cite[Theorem 2.10]{KochKohlTrumanUnderwood19a}. Thus we get:

\begin{lemma}
	There is a bijection between $\cdot$-quasi-ideals $I$ of $\B:=\B(N)$ and intermediate fields $K\le L_I \le L$  realizable with respect to $H_{N'}$ such that $L_I/K$ is also Hopf-Galois via the $K$-Hopf algebra $L_I[N'/I']^{G}$, where $I'$ is the image of $I$ under the isomorphism $I\mapsto I'$ above.
\end{lemma}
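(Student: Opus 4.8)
The plan is to leverage the previous lemma, which already gives a bijection between quasi-ideals $I$ of $\B(N)$ and intermediate fields $L_I$ realizable with respect to $H_{N'}$. It therefore suffices to show that, under this existing bijection, the additional hypothesis that $I$ be normal in $(B,\cdot)$ corresponds exactly to the extra structure demanded of $L_I$, namely that $L_I/K$ be Hopf-Galois via $L_I[N'/I']^{G}$.

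First I would recall the chain of identifications underlying the previous lemma. A quasi-ideal $I$ of $\B(N)$, being a $\circ$-stable subgroup of the opposite brace $(\B(N))'$, corresponds under the isomorphism $\B(N')\cong(\B(N))'$ of Section \ref{opp} (which sends $\phi_\eta\mapsto\eta$) to the subgroup $I'=\{\phi_i : i\in I\}$ of $N'$; this $I'$ is a $\circ'$-stable, equivalently $G$-stable, subgroup of $N'$, and its sub-Hopf algebra $H_{I'}=L[I']^{G}$ has fixed field exactly $L_I$. Closure of $I'$ under composition and inverses is immediate from $\phi_i\phi_j=\phi_{ji}$ and $\phi_i^{-1}=\phi_{i^{-1}}$, so $I'$ is genuinely a subgroup of $N'$.

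The key step is to translate normality of $I$ in $(B,\cdot)=N$ into normality of $I'$ in $N'$. Using the relation $\phi_\eta\phi_\pi=\phi_{\pi\eta}$ established in Section \ref{opp} together with $\phi_\eta^{-1}=\phi_{\eta^{-1}}$, a one-line computation gives $\phi_\eta\phi_i\phi_\eta^{-1}=\phi_{\eta^{-1}i\eta}$ for all $\eta\in N$ and $i\in I$. Hence $\phi_\eta\phi_i\phi_\eta^{-1}\in I'$ for all $\eta$ if and only if $\eta^{-1}i\eta\in I$ for all $\eta$; that is, $I'\trianglelefteq N'$ if and only if $I\trianglelefteq N$. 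Thus a quasi-ideal $I$ is a $\cdot$-quasi-ideal precisely when the corresponding $G$-stable subgroup $I'$ is normal in $N'$.

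Finally I would invoke \cite[Theorem 2.10]{KochKohlTrumanUnderwood19a}, applied with $N'$ in place of $N$ and $I'$ in place of $I$: since $I'$ is a $G$-stable normal subgroup of $N'$, the extension $L_I=L^{H_{I'}}$ over $K$ is itself Hopf-Galois, realized by the quotient Hopf algebra $L_I[N'/I']^{G}$. Reading the argument backwards—a realizable field $L_I$ for which $L_I/K$ is Hopf-Galois via $L_I[N'/I']^{G}$ forces $I'\trianglelefteq N'$, hence $I\trianglelefteq N$—produces the asserted bijection. The main point requiring care is the normality translation of the third paragraph, together with confirming that the Hopf algebra supplied by \cite[Theorem 2.10]{KochKohlTrumanUnderwood19a} is exactly $L_I[N'/I']^{G}$; the remaining correspondences are inherited verbatim from the previous lemma.
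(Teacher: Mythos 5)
Your proposal is correct and follows essentially the same route as the paper: inherit the bijection from the preceding lemma, observe that a $\cdot$-quasi-ideal corresponds to a $G$-stable subgroup $I'\trianglelefteq N'$, and invoke \cite[Theorem 2.10]{KochKohlTrumanUnderwood19a} for the Hopf--Galois structure on $L_I/K$ via $L_I[N'/I']^{G}$. Your explicit check that the anti-isomorphism $\eta\mapsto\phi_\eta$ (with $\phi_\eta\phi_i\phi_\eta^{-1}=\phi_{\eta^{-1}i\eta}$) carries normality of $I$ in $(N,\cdot)$ to normality of $I'$ in $N'$ is a detail the paper leaves implicit, and is a worthwhile addition.
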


How $L_I[N'/I']^{G}$ acts on $L_I$ is not obvious--see the discussion prior to \cite[Theorem 2.10]{KochKohlTrumanUnderwood19a} for a complete description.


 Of course, if $I\trianglelefteq (B,\circ)$, then the corresponding subgroup of $G$ is also normal. This gives:
 
 \begin{lemma}
 	$\circ$-quasi-ideals $I$ of $\B:=\B(N)$ correspond bijectively with intermediate fields $K\le L_I \le L$  realizable with respect to $H_{N'}$ such that $L_I/K$ Galois. 
 \end{lemma}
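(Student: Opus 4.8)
The plan is to build directly on the two preceding lemmas, which have already done the substantive work: quasi-ideals of $\B(N)$ correspond to realizable intermediate fields for $H_{N'}$, and adding $\cdot$-normality (a $\cdot$-QI) refines this to fields $L_I/K$ that are themselves Hopf-Galois. The remaining claim is that imposing $\circ$-normality instead--so that $I$ is a $\circ$-QI--refines the correspondence to fields $L_I/K$ that are \emph{Galois}. So the whole proof should amount to tracking what $\circ$-normality translates into on the field side.

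First I would invoke the established dictionary. A $\circ$-QI $I$ is in particular a quasi-ideal, so by the first lemma it already corresponds to a realizable intermediate field $K \le L_I \le L$ with respect to $H_{N'}$. The task is only to show that the extra hypothesis, normality of $I$ in $(B,\circ)$, is equivalent to $L_I/K$ being Galois. Following the passage immediately before the statement, the quasi-ideal $I$ maps to a $\circ$-stable subgroup $I'$ of $\B(N')$, and under the Greither--Pareigis/\cite{KochKohlTrumanUnderwood19a} correspondence (their Prop.~2.2) this corresponds to a $G$-stable subgroup of $N'$. The key sentence the author has planted is ``if $I\trianglelefteq(B,\circ)$, then the corresponding subgroup of $G$ is also normal.'' So the crux is: normality in the circle group translates, through the isomorphism $(B,\circ)\cong G=\Gal(L/K)$, into normality of the associated subgroup of $G$.

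The main step, then, is to recall the classical fact (Greither--Pareigis, or standard Hopf-Galois theory) that the intermediate field $L_I$ is Galois over $K$ precisely when the subgroup of $G$ to which $I$ corresponds is normal in $G$. Concretely, $(B,\circ)$ is isomorphic to $G$ via the map sending the circle group to the Galois group; a subgroup $I$ that is $\circ$-normal therefore maps to a normal subgroup $\widehat{I}\trianglelefteq G$, and for a Galois extension the intermediate field fixed by (equivalently, realized by) a normal subgroup is itself Galois over the base, with the complementary quotient giving $\Gal(L_I/K)$. I would state this as: $L_I/K$ is Galois if and only if the corresponding subgroup of $G$ is normal, which holds if and only if $I\trianglelefteq(B,\circ)$, i.e.\ $I$ is a $\circ$-QI. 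Conversely, any realizable $L_I$ with $L_I/K$ Galois forces the corresponding subgroup of $G$ to be normal, hence the preimage quasi-ideal to be $\circ$-normal, giving the reverse direction of the bijection.

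The main obstacle I anticipate is purely bookkeeping rather than conceptual: one must verify that the chain of identifications--$\circ$-normal subgroup $I$ of $(B,\cdot,\circ)$, then $\circ$-stable subgroup $I'$ of $\B(N')$, then $G$-stable subgroup of $N'$, then subgroup of $G\cong(B,\circ)$--carries the normality condition through \emph{unchanged at each stage}, and in particular that the group whose normality in $G$ governs Galoisness is exactly the image of $I$ under the circle-group isomorphism and not some conjugate or complement. Because the first two lemmas already transport the quasi-ideal structure faithfully and the author has explicitly flagged the normality-preservation step, I expect this to go through cleanly; the only care needed is to cite the correct classical result (that a normal subgroup of $G$ yields a Galois subextension) and to confirm the correspondence is normality-preserving in both directions so that the claimed bijection is genuinely onto the Galois intermediate fields.
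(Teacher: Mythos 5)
Your proposal is correct and follows essentially the same route as the paper, which justifies this lemma with the single observation that if $I\trianglelefteq(B,\circ)$ then the corresponding subgroup of $G$ is normal, combined with the preceding lemma on quasi-ideals and the classical fact that normal subgroups of $G$ cut out Galois subextensions. The bookkeeping step you flag (that the chain of identifications carries normality through unchanged) is exactly what the paper elides with ``Of course,'' so your account is, if anything, slightly more careful.
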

 
 We summarize:

\begin{theorem}\label{Realizable}
	Let $L/K$ be Galois, group $G$, and let $N\le\Perm(G)$ be regular and $G$-stable. Let $\B=\B(N)$ and $\B'=(\B(N))'=\B(N')$. Let $I\subseteq \B$ be a quasi-ideal.
	Then there exists a field $K\le L_I\le L$ such that $L/L_I$ is Hopf-Galois via the $L_I$-Hopf algebra $L[I]^{G}$. Furthermore:
	\begin{enumerate}
		\item If $I$ is a $\cdot$-QI then $L_I/K$ is also Hopf-Galois with respect to a Hopf algebra which depends on $H$.
		\item If $I$ is a $\circ$-QI then $L_I/K$ is (classically) Galois.
		\item If $I$ is an ideal, then $L_I/K$ is both Galois and Hopf-Galois in the sense mentioned above.
	\end{enumerate} 
	Furthermore, any realizable intermediate field $F$ is of the form $L_I$ for some quasi-ideal $I$; and if $F$ satisfies the the proprieties (1), (2), or (3) above, then $I$ is a $\cdot$-QI, $\circ$-QI, or an ideal respectively.
\end{theorem}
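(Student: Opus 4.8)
The plan is to assemble Theorem \ref{Realizable} by stitching together the three lemmas that immediately precede it, after first establishing the one genuinely new assertion: that every quasi-ideal $I$ of $\B=\B(N)$ yields a field $K\le L_I\le L$ with $L/L_I$ Hopf-Galois via the $L_I$-Hopf algebra $L[I]^G$. First I would invoke the discussion preceding the statement: a quasi-ideal $I$ of $\B$ maps, through the isomorphism $(\B(N))'\to\B(N')$, to a $\circ$-stable subgroup of $\B(N')$, which by \cite[Theorem 4.3]{Childs18} is precisely a sub-Hopf algebra of $H_{N'}$ and hence a realizable intermediate field; I would name this field $L_I$. The claim that $L/L_I$ is itself Hopf-Galois via $L[I]^G$ should follow from the observation (already recorded in the paragraph between the first and second lemmas) that $\circ$-stable subgroups of $\B$ correspond to $G$-stable subgroups $I_*$ of $N$, together with \cite[Prop. 2.2]{KochKohlTrumanUnderwood19a}, which realizes such subgroups as the data cutting out a Hopf-Galois structure on the upper extension $L/L_I$.

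Next I would dispatch the three enumerated refinements by direct appeal to the corresponding lemmas. For (1), a $\cdot$-QI is a quasi-ideal that is normal in $(B,\cdot)$; the lemma on $\cdot$-quasi-ideals already gives that $L_I/K$ is Hopf-Galois via $L_I[N'/I']^G$, and I would simply cite it, noting the Hopf algebra depends on the original structure $H$ through $N'$. For (2), a $\circ$-QI is normal in $(B,\circ)$, so the image subgroup of $G$ is normal and the lemma on $\circ$-quasi-ideals gives that $L_I/K$ is classically Galois. For (3), an ideal is by definition both a $\cdot$-QI and a $\circ$-QI, so $L_I/K$ is simultaneously Galois and Hopf-Galois, which is just the conjunction of (1) and (2). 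The converse half — that any realizable $F$ equals $L_I$ for some quasi-ideal, and that properties (1), (2), (3) force $I$ to be a $\cdot$-QI, $\circ$-QI, or ideal — follows because each of the three lemmas is stated as a bijection; I would run each correspondence backwards, reading off from the Galois or Hopf-Galois behavior of $F$ the requisite normality of $I$ in $(B,\cdot)$, in $(B,\circ)$, or both.

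The main obstacle I anticipate is not any single lemma but the bookkeeping needed to keep three distinct but interrelated objects aligned: the brace $\B=\B(N)$, its opposite $\B'\cong\B(N')$, and the two field extensions $L/L_I$ and $L_I/K$ sitting above and below $L_I$. The subtle point is that the substructures (quasi-ideal, $\cdot$-QI, $\circ$-QI) are defined on $\B$, but they govern the Hopf-Galois structure $H_{N'}$ on the opposite side, and one must be careful that passing through the isomorphism $\B'\cong\B(N')$ preserves exactly the normality conditions claimed. I would therefore make explicit once, at the start, that the correspondence $I\mapsto I'$ is a brace isomorphism $\B'\to\B(N')$ taking the $\cdot'$-operation of $\B$ (which is the $\cdot$-operation of $\B'$) to the structure group operation, so that normality in $(B,\cdot)$ transports correctly; after that, the three cases and their converses are purely a matter of citing the relevant lemma and reversing its bijection.
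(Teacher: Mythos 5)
Your proposal matches the paper's own treatment: the paper offers no separate proof of Theorem \ref{Realizable}, introducing it with ``We summarize:'' so that it is understood to follow by combining the three preceding lemmas (quasi-ideals, $\cdot$-QIs, $\circ$-QIs versus realizable fields for $H_{N'}$) with the surrounding discussion of \cite[Prop.~2.2, Theorem~2.10]{KochKohlTrumanUnderwood19a} and the transport of $I$ to a $G$-stable subgroup $I'$ of $N'$ via $(\B(N))'\cong\B(N')$ — exactly the assembly you describe. Your added care about tracking normality through the isomorphism $I\mapsto I'$ and reversing each bijection for the converse is consistent with, and slightly more explicit than, what the paper leaves implicit.
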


\begin{example}
	Suppose $\B=(B,\cdot,\cdot)$ is the trivial brace.  If $I\le(B,\cdot)$ is any subgroup, then $I$ is automatically a quasi-ideal since $x^{-1}(x\circ y) = y$. It is an ideal if and only if $I$ is normal in $(B,\cdot)$. This makes sense since $\B'$ is (isomorphic to) the brace corresponding to the classical Galois structure: each subgroup gives an intermediate field, and the Hopf-Galois structure on $L_I$ coincides with the Galois structure when $I$ is normal.
\end{example}

\begin{example}
	Suppose $\B=(B,\cdot',\cdot)$ is the almost trivial brace.  If $I\le(B,\cdot)$ is any subgroup, then $I$ is a quasi-ideal if and only if $x^{-1}\cdot'(x\circ y) = xyx^{-1}$ for all $x\in B, y\in I$, i.e., if $I$ is normal in $(B,\cdot)$. If this is the case, then it is automatically an ideal as well. 
\end{example}

\begin{example}
	Let $B=\gen{\eta,\pi}\cong D_4$ with, as usual,
	\[\eta^i\pi^j\circ \eta^k\pi^{\ell} = \eta^{k+(-1)^{\ell}i+2j{\ell}}\pi^{j+\ell},\;0\le i,k \le 3,\; 0\le j,\ell\le 1.\]
	Of course, $I=\{1_B\}$ and $I=B$ are ideals. The group $(B,\cdot)$ has eight other subgroups.
	\begin{description}
		\item[$I=\gen{\eta}$] We have $(x_j)^{-1}(x_j\circ \eta^k) = x_j^{-1}\eta^kx_j\in I$ since $I$ is normal in $(B,\cdot)$. Thus $I$ is a quasi-ideal. Since $I$ is also a subgroup of $(B,\circ)$ it must be normal in this group as well, hence $I$ is an ideal.
		\item[$I=\gen{\eta^2}$] This must be a quasi-ideal as well from the work above, as well as an ideal since $I=Z(B,\cdot)=Z(B,\circ)$. 
		\item[$I=\gen{\eta^k\pi},\;0\le k\le 3$] Since
		\[(\eta\pi)^{-1}(\eta\pi\circ \eta^k\pi) = \eta\pi(\eta^{k-1}) = \eta^{2-k}\pi \not\in\gen{\eta^k\pi}\]
		we see that $I$ is not a quasi-ideal.
		\item[$I=\gen{\eta^2,\pi}$] From the above, the quasi-ideal condition $x^{-1}(x\circ y)\in I$ holds for $y=\eta^2$. For $k=0,2$ we have
		\[(\eta^{i}\pi^{j})^{-1}(\eta^i\pi^j\circ \eta^k\pi)=\pi^{-j}\eta^{-i}(\eta^{i+k}\pi^{j+1})=\eta^k\pi\in I \]
		so $I$ is a quasi-ideal of $\B$. It is also an ideal since $I$=4.
		\item[$I=\gen{\eta^2,\eta\pi}$] For $k=1,3$ we have
		\[(\eta^{i}\pi^{j})^{-1}(\eta^i\pi^j\circ \eta^k\pi)=\pi^{-j}\eta^{-i}(\eta^{i-k}\pi^{j+1})=\eta^{(-1)^jk}\pi\in I \]
		and is also an ideal.
	\end{description}
\end{example}

\section{Self-Opposite Braces}\label{SO}

We conclude this paper with a discussion concerning  self-opposite braces. Of course, $\B=\B'$ if and only if $(B,\cdot)$ is an abelian group. However, it is possible for $\B$ and $\B'$ to be isomorphic, as the following example shows.

\begin{example}
	Let $(G,\ast_G)$ be any nonabelian group. Let $B=G\times G$, and define two operations on $B$ as follows:
	\[
	(x,y)\cdot(z,w) = (x,y)\circ(z,w) = (x\ast_Gz,y\ast_Gw).
	\]
	It is easy to show that $\B:=(B,\cdot,\circ)$ is a brace, and that the map $T:\B\to\B'$ given by $T(x,y)=(y,x)$ is an isomorphism.
\end{example}

More generally, if $\B$ is any brace, then so is $\B\times \B'$, and $(\B\times \B')'=(\B'\times \B) \cong \B\times \B'$. While equality, not isomorphism, is required for $R_{\B}$ and $R_{\B'}$ to be equal, the enumeration of realizable fields depends only on the isomorphism class of $\B$. Clearly, if $\B(N)$ is self-opposite then quasi-ideal, etc.\@ classify the realizable, etc.\@, fields in the sense of Theorem \ref{Realizable} corresponding to the Hopf algebra $H_N$. 

Because of this, it would be interesting to have sufficient, and possibly necessary, conditions for a brace to be self-opposite. While we do not have a full set of conditions (though certainly $(B,\cdot)$ abelian, or $\B\cong \mathfrak{C}\times \mathfrak{C}'$ suffice), we do have some necessary conditions, from which we can determine some braces which are not self-opposite. 

For example, let us call $(x,y)\in B\times B$ an {\it L-pair} if $x\circ y = xy$; if $x\circ y = yx$ then we will call $(x,y)$ an {\it R-pair}. If $\phi:\B\to\B'$ is an isomorphism and $(x,y)$ is an L-pair of $\B$, then
\[\phi(x)\circ \phi(y)= \phi(x\circ y) = \phi(x\cdot y) = \phi(x)\cdot'\phi(y) = \phi(y)\cdot \phi(x),\]
and hence $(\phi(x),\phi(y))$ is an R-pair of $\B$. Thus we get:

\begin{proposition}
	If the number of L-pairs and R-pairs of $\B$ is not equal, then $\B$ is not self-opposite.
\end{proposition}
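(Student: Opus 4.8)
The plan is to establish a bijection between the set of L-pairs of $\B$ and the set of R-pairs of $\B$ under any isomorphism $\phi:\B\to\B'$, and then argue contrapositively. The key observation, which is already carried out in the displayed computation immediately preceding the statement, is that an isomorphism $\phi:\B\to\B'$ sends L-pairs of $\B$ to R-pairs of $\B$. First I would note that since $\phi$ is a brace isomorphism it respects both the $\circ$ operation and the $\cdot$ operation (the latter becoming $\cdot'$ on the target $\B'$), so the chain $\phi(x)\circ\phi(y)=\phi(x\circ y)=\phi(x\cdot y)=\phi(x)\cdot'\phi(y)=\phi(y)\cdot\phi(x)$ shows precisely that $(\phi(x),\phi(y))$ satisfies the R-pair condition in $\B$.

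Next I would make the counting argument explicit. Define $\Phi:B\times B\to B\times B$ by $\Phi(x,y)=(\phi(x),\phi(y))$. Since $\phi$ is a bijection on $B$, the map $\Phi$ is a bijection on $B\times B$. By the computation above, $\Phi$ restricts to a map from the set of L-pairs into the set of R-pairs; because $\Phi$ is injective on all of $B\times B$, this restriction is injective, so the number of L-pairs is at most the number of R-pairs. To get the reverse inequality I would run the symmetric argument: an isomorphism $\phi:\B\to\B'$ also sends R-pairs of $\B$ to L-pairs of $\B$, via the analogous chain $\phi(x)\cdot'\phi(y)=\phi(x\cdot y)=\phi(y\cdot x)$ reorganized to match $\phi(x)\circ\phi(y)=\phi(x\circ y)$ when $(x,y)$ is an R-pair. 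Alternatively, and perhaps more cleanly, I would apply the same inequality to the isomorphism $\phi^{-1}:\B'\to\B$, or invoke Lemma 3.3(1) that $(\B')'=\B$, to deduce that the number of R-pairs is also at most the number of L-pairs. Combining the two inequalities yields equality.

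Finally, I would phrase the conclusion as a contrapositive: if $\B$ is self-opposite, meaning there exists a brace isomorphism $\B\to\B'$, then the two counts coincide; hence if the counts differ, no such isomorphism can exist and $\B$ is not self-opposite. This is exactly the stated proposition.

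I do not anticipate a serious obstacle here, since the core identity is already supplied in the text and the remainder is a finite-set cardinality argument. The only point requiring a little care is securing both inequalities rather than just one: one must verify that the L-to-R assignment is genuinely reversible, which is handled either by the symmetric computation for R-pairs or, more economically, by applying the one-directional result to $\phi^{-1}$ together with the involution $(\B')'=\B$. Everything rests on finiteness of $B$, which is part of the standing hypotheses, so the injection-implies-equal-cardinality step is valid.
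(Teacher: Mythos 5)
Your proposal is correct and follows essentially the same route as the paper, whose entire argument is the displayed computation showing that an isomorphism $\phi:\B\to\B'$ sends L-pairs to R-pairs. Your additional care in securing the reverse inequality (via the symmetric computation or via $\phi^{-1}$ and $(\B')'=\B$) only makes explicit what the paper leaves implicit.
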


\begin{example}
	Let us consider Example \ref{three} one last time: $B=\gen{\eta,\pi}\cong D_4$ with
	\[\eta^i\pi^j\circ \eta^k\pi^{\ell} = \eta^{2j\ell}(\eta^k\pi^{\ell})(\eta^i\pi^j)= \eta^{k+(-1)^{\ell}i+2j{\ell}}\pi^{j+\ell},\;0\le i,k \le 3,\; 0\le j,\ell\le 1.\]
	If $\eta^i\pi^j\circ \eta^k\pi^{\ell}=\eta^i\pi^j\eta^k\pi^{\ell}=\eta^{i+(-1)^jk}\pi^{j+\ell}$ then we must have
	\[k+(-1)^{\ell}i+2j{\ell}\equiv i+(-1)^jk \pmod 4.\]
	Picking $j=\ell=0$ gives us $16$ L-pairs. If $j=1,\ell=0$ we get
	\[k+i\equiv i-k \pmod 4,\]
	which provides $8$ pairs, corresponding to the cases $k=0,2$. Setting $j=0,\ell=1$ gives another $8$ pairs, and if $j=\ell=1$ we get
		\[k-i+2\equiv i-k \pmod 4,\]
	so $2(i+k)\equiv 2 \pmod{4}$, which holds if $i$ and $k$ are of different parity, giving another $8$ pairs. In total, $\B$ has $40$ L-pairs.
	
	On the other hand, if  $\eta^i\pi^j\circ \eta^k\pi^{\ell} = \eta^{2j\ell}(\eta^k\pi^{\ell})(\eta^i\pi^j)= \eta^k\pi^{\ell}\eta^i\pi^j,\;j,\ell=0,1$ then it is necessary and sufficient that $2j\ell= 0$, in other words either $j=0$ or $\ell=0$ or both. This gives $48$ R-pairs for $\B$, hence $\B$ is not self-opposite.
\end{example}

\bibliographystyle{amsalpha}
\bibliography{MyRefs}

\end{document}